\newcommand{\qbinom}[3]{\genfrac{[}{]}{0pt}{}{#1}{#2}_{#3}}
\newtheorem{theorem}{Theorem}
\newtheorem{prop}[theorem]{Proposition}
\newtheorem{corollary}[theorem]{Corollary}
\theoremstyle{definition}
\begin{document}

\title{Enumeration of area-weighted Dyck paths with restricted height}
\author{A.L.~Owczarek$^1$ and T. Prellberg$^2$ \\[1ex]
\footnotesize
  \begin{minipage}{13cm} 
 $^1$Department of Mathematics and Statistics,\\
  The University of Melbourne,\\Parkville, Victoria 3010, Australia\\[1ex]
  $^2$ School of Mathematical Sciences\\
Queen Mary University of London\\
Mile End Road, London E1 4NS, UK
\end{minipage}
}

\maketitle

\begin{center}
  \rule{10cm}{1pt}
\end{center}

\begin{abstract}
We derive explicit expressions for $q$-orthogonal polynomials arising
in the enumeration of area-weighted Dyck paths with restricted height. 
\end{abstract}

\begin{center}
  \rule{10cm}{1pt}
\end{center}

\section{Introduction and Statement of Results}

Dyck paths are directed walks on $\mathbb{Z}^2$ starting at $(0,0)$ 
and ending on the line $y=0$, which have no vertices with negative
$y$-coordinates, and which have steps in the $(1,1)$ and $(1,-1)$
directions \cite{stanley1999}. We impose the additional geometrical constraint that the
paths have height at most $h$, {\it i.e.}, they lie between lines $y=0$ and
$y=h$. 
Given a Dyck path $\pi$, we define the length $n(\pi)$ to be half the number of
its steps, and the area $m(\pi)$ to be the sum of the starting heights of all steps
in the $(1,1)$ direction in the path. This is sometimes also called the rank function
of a Dyck path \cite{ferrari2005,sapounakis2006}, and is equivalent to the number of diamond
plaquettes under the Dyck path. An alternative definition of the area is the sum of the
heights of all steps in the Dyck path, which evaluates to $n(\phi)+2m(\phi)$ and is equivalent
to the number of triangular plaquettes under the Dyck path. The definition used here has the 
advantage of enabling a more elegant and concise mathematical formulation of our results.

We denote by $u(\pi)$ and $v(\pi)$ the number of vertices in the line $y=0$ 
(excluding the vertex $(0,0)$) and the number of vertices in the line $y=h$, respectively.
Let ${\mathcal{D}_h}$ be the set of Dyck paths with height at most $h$, and define
the generating function
$$
D_h(a,b;q,t)=\sum_{\pi\in\mathcal{D}_h} a^{u(\pi)}b^{v(\pi)}q^{m(\pi)}t^{n(\pi)}\;. 
$$
The generating function for Dyck paths with restricted height \cite{brak2005, flajolet1980}  and the generating function for area-weighed
Dyck paths \cite{duchon2000,flajolet1980} have previously been studied. Here we extend these works by combining these properties.

The purpose of this note is to derive the following expression for $D_h(a,b;q,t)$.
\begin{theorem} 
For $h\geq0$,
\begin{multline}
\label{result}
D_h(a,b;q,t)=\\
\frac
{{\displaystyle\sum\limits_{m=0}^\infty}
(-t)^m q^{m(m-1)}
\left( (1-b) \qbinom{h-m}{m}{q} 
    +b \qbinom{h+1-m}{m}{q} 
     - (1-b) \qbinom{h-1-m}{m-1}{q}
    - b \qbinom{h-m}{m-1}{q} \right)}
{{\displaystyle\sum\limits_{m=0}^\infty}
(-t)^m q^{m(m-1)}
\left( (1-b) \qbinom{h-m}{m}{q} 
    +b \qbinom{h+1-m}{m}{q} 
     - (1-a)(1-b) \qbinom{h-1-m}{m-1}{q}
    - (1-a) b \qbinom{h-m}{m-1}{q} \right)}\;.
\end{multline}
\end{theorem}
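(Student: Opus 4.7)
The plan is to combine a first-return decomposition of Dyck paths with an explicit polynomial ansatz whose $h$-dependence is governed by the $q$-Pascal identity. Set
\[
\mathcal{A}_h(b;q,t):=\sum_{m\ge 0}(-t)^m q^{m(m-1)}A_{h,m},\qquad A_{h,m}:=(1-b)\qbinom{h-m}{m}{q}+b\qbinom{h+1-m}{m}{q},
\]
and write $B_{h,m}:=(1-b)\qbinom{h-1-m}{m-1}{q}+b\qbinom{h-m}{m-1}{q}$ for the bracket appearing in the theorem; direct inspection gives the shift identity $B_{h,m}=A_{h-2,m-1}$.

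First I would derive a functional recurrence for $D_h$ via the classical first-return decomposition: every non-empty $\pi\in\mathcal{D}_h$ factorises uniquely as $\pi=U\pi_1 D\pi_2$, where $\pi_1$ (translated down by one) lies in $\mathcal{D}_{h-1}$ and $\pi_2\in\mathcal{D}_h$. Tracking statistics ($n(\pi)=1+n(\pi_1)+n(\pi_2)$, $m(\pi)=m(\pi_1)+n(\pi_1)+m(\pi_2)$ since the translation shifts every up-step height by one, $u(\pi)=1+u(\pi_2)$, and $v(\pi)=v(\pi_1)+v(\pi_2)$) produces, for $h\ge 1$,
\[
D_h(a,b;q,t)=\frac{1}{1-at\,F_{h-1}(b;q,qt)},\qquad F_h(b;q,t):=D_h(1,b;q,t),
\]
with $F_h=1/(1-t\,F_{h-1}(b;q,qt))$ as the $a=1$ specialisation.

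Next, applied to each $q$-binomial in $A_{h,m}$ the Pascal identity $\qbinom{n}{k}{q}=q^k\qbinom{n-1}{k}{q}+\qbinom{n-1}{k-1}{q}$ yields the coefficient recurrence $A_{h,m}=q^m A_{h-1,m}+A_{h-2,m-1}$, valid for all $h\ge 1$. Multiplying by $(-t)^m q^{m(m-1)}$ and summing converts this into the functional three-term recurrence
\[
\mathcal{A}_h(b;q,t)=\mathcal{A}_{h-1}(b;q,qt)-t\,\mathcal{A}_{h-2}(b;q,q^2 t)\qquad(h\ge 1),
\]
while rearranging $q^m A_{h-1,m}=A_{h,m}-A_{h-2,m-1}=A_{h,m}-B_{h,m}$ and resumming shows
\[
P_h(b;q,t):=\mathcal{A}_{h-1}(b;q,qt)=\sum_{m\ge 0}(-t)^m q^{m(m-1)}\bigl(A_{h,m}-B_{h,m}\bigr)\qquad(h\ge 1),
\]
which is exactly the numerator of \eqref{result}.

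An easy induction on $h$ (base cases $h=0,1$ checked directly) then gives $F_h=P_h/\mathcal{A}_h$: substituting the ansatz into $F_h=1/(1-t\,F_{h-1}(b;q,qt))$ and using $P_{h-1}(b;q,qt)=\mathcal{A}_{h-2}(b;q,q^2 t)$ reduces the identity to the three-term recurrence above. Feeding $F_h=P_h/\mathcal{A}_h$ back into the formula for $D_h$ produces
\[
D_h=\frac{P_h}{P_h-at\,\mathcal{A}_{h-2}(b;q,q^2 t)},
\]
and the final reindexing $-t\,\mathcal{A}_{h-2}(b;q,q^2 t)=\sum_{m\ge 0}(-t)^m q^{m(m-1)}B_{h,m}$ identifies the denominator with $\sum_m(-t)^m q^{m(m-1)}\bigl(A_{h,m}-(1-a)B_{h,m}\bigr)$, matching \eqref{result}. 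The main conceptual step is spotting that $\qbinom{h-m}{m}{q}$ and $\qbinom{h+1-m}{m}{q}$ are two independent solutions of the Pascal-like recurrence $c_{h,m}=q^m c_{h-1,m}+c_{h-2,m-1}$, with $b$ entering linearly as the interpolation coefficient encoding the boundary condition at $y=h$; after that observation, the remainder of the argument reduces to routine resummation.
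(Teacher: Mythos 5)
Your argument is correct for $h\geq1$ and it takes a genuinely different route from the paper. Both proofs start from the same first-return decomposition, giving $D_h(a,b;q,t)=1/(1-at\,D_{h-1}(1,b;q,qt))$; after that the paper invokes Flajolet's theory of convergents to get the three-term recurrence $Q_h(a,b;q,t)=Q_{h-1}(a,1;q,t)-bq^{h-1}tQ_{h-2}(a,1;q,t)$ for the denominator polynomials, computes the generating function $W(z)=\sum_h Q_h z^h$ in closed form in terms of the basic hypergeometric series ${}_1\phi_2$, and extracts the coefficient of $z^h$ via the $q$-binomial theorem, thereby \emph{deriving} the explicit sums. You instead \emph{verify} the explicit sums directly: the $q$-Pascal identity gives $A_{h,m}=q^mA_{h-1,m}+A_{h-2,m-1}$, hence $\mathcal{A}_h(t)=\mathcal{A}_{h-1}(qt)-t\,\mathcal{A}_{h-2}(q^2t)$, and an induction in $h$ propagates the ansatz $D_h(1,b;q,t)=\mathcal{A}_{h-1}(b;q,qt)/\mathcal{A}_h(b;q,t)$ through the continued-fraction recurrence; feeding this back in with the $a$-weight and reindexing $-t\,\mathcal{A}_{h-2}(b;q,q^2t)=\sum_m(-t)^mq^{m(m-1)}B_{h,m}$ reproduces both numerator and denominator of the theorem. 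I checked the key identities ($B_{h,m}=A_{h-2,m-1}$, the coefficient recurrence for $h\geq1$, the resummations, and the base cases $\mathcal{A}_{-1}=b$, $\mathcal{A}_0=1$) and they hold. Your approach avoids the $z$-generating function and the ${}_1\phi_2$ machinery entirely, at the cost of needing the answer in advance, whereas the paper's method actually produces the polynomials $Q_h$ and in passing records their generating function, which has independent interest.

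One caveat: your coefficient recurrence fails at $h=0$ (the offending instance of $q$-Pascal is $\qbinom{0}{0}{q}$, where $\qbinom{-1}{0}{q}=\qbinom{-1}{-1}{q}=0$ under the paper's extended definition), so the identification of $P_0=\mathcal{A}_{-1}(b;q,qt)=b$ with the displayed numerator at $h=0$ breaks down; your ``base cases $h=0,1$'' verify $F_0=P_0/\mathcal{A}_0$, not the theorem's own $h=0$ instance. So, like the paper, you must treat $h=0$ by a separate direct evaluation of the stated formula rather than as a consequence of the induction; make that check explicit.
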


Here, we have used the standard notation for $q$-binomial coefficients \cite{goulden1983,stanley1997};
for $n\geq0$ and $0\leq k\leq n$ we define
$$
\qbinom{n}{k}{q}=\frac{(q;q)_n}{(q;q)_k (q;q)_{n-k}}\;,\quad\text{where}\quad (a;q)_n=\prod_{i=0}^{n-1}(1-aq^i)\;.
$$
We extend this definition to integer-valued $n,k$ by letting $\qbinom{n}{k}{q}=0$ when
$k<0$ or $k>n$. (This definition
is commensurate with the lattice path definition of $q$-binomial coefficients and is necessary to allow
for Theorem 1 to be valid even for $h=0$.)

For $a=b=1$, this identity simplifies considerably.
\begin{corollary}
For $h\geq0$,
$$
D_h(1,1;q,t)=
\frac
{{\displaystyle\sum\limits_{m=0}^\infty}
(-t)^m q^{m^2} \qbinom{h-m}{m}{q}}
{{\displaystyle\sum\limits_{m=0}^\infty}
(-t)^m q^{m(m-1)} \qbinom{h+1-m}{m}{q}}\;.
$$
\end{corollary}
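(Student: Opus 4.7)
The plan is to derive the corollary as a direct specialization of Theorem~1 at $a=b=1$, followed by a single application of the $q$-Pascal identity. First I would substitute $a=b=1$ into the formula for $D_h(a,b;q,t)$: in both the numerator and the denominator the prefactor $(1-b)$ wipes out the first and third terms of each bracket, and in the denominator the further factor $(1-a)$ kills the remaining correction terms. What is left is
\[
D_h(1,1;q,t)\;=\;\frac{\sum_{m\geq 0}(-t)^m q^{m(m-1)}\Bigl[\qbinom{h+1-m}{m}{q} - \qbinom{h-m}{m-1}{q}\Bigr]}{\sum_{m\geq 0}(-t)^m q^{m(m-1)}\qbinom{h+1-m}{m}{q}}\,,
\]
and the denominator already matches the claimed form.

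It then remains to recast the numerator. For this I would invoke the $q$-Pascal rule
\[
\qbinom{h+1-m}{m}{q} \;=\; \qbinom{h-m}{m-1}{q} + q^m\qbinom{h-m}{m}{q}\,,
\]
which collapses the bracketed difference into $q^m\qbinom{h-m}{m}{q}$. Absorbing the extra factor $q^m$ into the existing weight converts $q^{m(m-1)}$ into $q^{m^2}$, so the numerator becomes $\sum_{m\geq 0}(-t)^m q^{m^2}\qbinom{h-m}{m}{q}$, exactly as asserted.

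I do not anticipate a substantial obstacle. The only point requiring attention is that, under the stated convention $\qbinom{n}{k}{q}=0$ for $k<0$ or $k>n$, the $q$-Pascal identity used above fails at the single exceptional pair $(n,k)=(0,0)$. In the present application one has $(n,k)=(h+1-m,m)$, so this offending case would require $h=-1$ and $m=0$, which is excluded by the hypothesis $h\geq 0$. Consequently the simplification goes through term by term for every admissible $m$, and the derivation is complete.
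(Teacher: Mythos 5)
Your proposal is correct and is exactly the simplification the paper intends (the paper states the corollary without an explicit proof, as an immediate specialization of Theorem~1 at $a=b=1$ followed by the $q$-Pascal recurrence $\qbinom{h+1-m}{m}{q}=\qbinom{h-m}{m-1}{q}+q^m\qbinom{h-m}{m}{q}$). Your check that the only exceptional case of the $q$-Pascal identity under the paper's vanishing convention is $(n,k)=(0,0)$, which would require $h=-1$, is a careful touch that the paper leaves implicit.
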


Taking the limit $h\to\infty$, we recover the well-known result \cite{flajolet2009}
that the area-weighted generating function $D(q,t)$ for Dyck
paths without height restriction is given by
$$
D(q,t)=\frac
{{\displaystyle\sum\limits_{m=0}^\infty}
\dfrac{(-t)^m q^{m^2}}{(q;q)_m}}
{{\displaystyle\sum\limits_{m=0}^\infty}
\dfrac{(-t)^m q^{m(m-1)}}{(q;q)_m}}\;.
$$
More precisely, $D(q,t)$ as defined here is related to $F(z,q)$ in Eqn. (75) of Chapter V in \cite{flajolet2009} via $F(z,q)=D(q,qz)$.

\section{Proofs}

We use as the starting point of our derivation a well-established connection between lattice path 
enumeration and continued fractions \cite{flajolet1980}. 
\begin{prop} $D_0(a,b;q,t)=b$, $D_1(a,b;q,t)=1/(1-abt)$, and for $h\geq2$,
\begin{equation}
\label{confrac}
D_h(a,b;q,t)= \cfrac{1}{1- \cfrac{ at}{1-
    \cfrac{qt}{1- \cfrac{q^2t}{1- \cfrac{q^3t}{
          \genfrac{}{}{0pt}{}{}{\ddots 
            \genfrac{}{}{0pt}{}{}{
              \genfrac{}{}{0pt}{}{}{ 
              \genfrac{}{}{0pt}{}{}{ 
                \genfrac{}{}{0pt}{}{}{
                  \cfrac{q^{h-2}t}{1-bq^{h-1}t}}}}}}}}}}}\;.  
\end{equation}
\end{prop}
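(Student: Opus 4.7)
The plan is to derive the continued fraction by a first-return decomposition, which is the combinatorial engine behind the Flajolet correspondence between lattice paths and Jacobi-type continued fractions. I would first verify the base cases $h=0$ and $h=1$ by direct inspection: the only path in $\mathcal{D}_0$ is the single vertex $(0,0)$, which sits on $y=h=0$ and therefore contributes $b$; a path in $\mathcal{D}_1$ consists of $n\geq 0$ up-down pairs, each touching $y=1$ exactly once and $y=0$ at its terminal vertex, yielding $a^n b^n t^n$ and hence $D_1(a,b;q,t) = 1/(1-abt)$.

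For $h\geq 2$ I would introduce, for each $1 \leq k \leq h$, an auxiliary generating function $P_{h,k}(b;q,t)$ enumerating paths from $(0,k)$ to $(N,k)$ that stay in the strip $k \leq y \leq h$, weighted by $q^{m(\pi)} t^{n(\pi)} b^{w(\pi)}$, where $w(\pi)$ counts vertices at $y=h$, including the starting vertex precisely when $k=h$. These auxiliary functions depend on $b$ but not on $a$, since $a$ plays a role only at the outermost $y=0$ level. The trivial boundary case reads $P_{h,h} = b$, corresponding to the unique empty path starting and ending at height $h$.

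The essential step is the recursion $P_{h,k} = 1/(1 - q^k t \, P_{h,k+1})$ for $1 \leq k < h$. This follows by decomposing a non-empty path counted by $P_{h,k}$ at its first return to level $k$: an initial up step from $y=k$ to $y=k+1$, a subpath in the strip $[k+1,h]$ with both endpoints at height $k+1$, a down step back to $y=k$, and a remainder of the same type as the original. The up step starting at height $k$ contributes $q^k t$, the two subpaths contribute $P_{h,k+1}$ and $P_{h,k}$, and the intermediate vertex at $y=k<h$ carries no additional weight. An analogous first-return decomposition at the $y=0$ level yields $D_h = 1/(1 - a t \, P_{h,1})$, the factor $a$ now arising from the new vertex at $y=0$ at the end of each excursion.

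Unfolding the recursion from $k=1$ through the boundary $P_{h,h}=b$ and substituting into $D_h = 1/(1 - a t \, P_{h,1})$ produces exactly the continued fraction (\ref{confrac}). The main point that requires care, though conceptually routine, is the asymmetric bookkeeping of the $a$ and $b$ weights: $a$ must be inserted only at the outermost level, $b$ must enter only at the innermost boundary through $P_{h,h}=b$, and the bulk levels must be governed by the unweighted recursion. Once this is set up correctly, the continued fraction structure follows automatically.
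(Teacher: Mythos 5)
Your proof is correct and follows essentially the same route as the paper: a first-return (arch) decomposition iterated level by level to produce the continued fraction. Your auxiliary functions are just the paper's iterates in disguise, $P_{h,k}=D_{h-k}(1,b;q,q^k t)$, the paper absorbing the elevation of the strip into the substitution $t\mapsto qt$ and the specialisation $a\mapsto 1$ instead of keeping absolute coordinates.
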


While this can easily be proved by specialising the general theory in \cite{flajolet1980} to
the case at hand, we shall give a direct combinatorial proof.

\begin{proof} 
The only Dyck path of height zero is the zero step Dyck path. If $h=0$ then it has weight $b$, whence
$D_0(a,b;q,t)=b$. Let now $h\geq1$.  Except for the zero-step Dyck path with weight $1$, every Dyck 
path of height at most $h$ 
can be decomposed uniquely into a Dyck path of height at most $(h-1)$ bracketed by a pair of 
steps into the $(1,1)$ and $(1,-1)$ directions, followed by another Dyck path of height $h$. 
The associated generating functions are $atD_{h-1}(1,b;q,qt)$ and $D_h(a,b;q,t)$, respectively. 
This decomposition leads to the functional-recurrence
$$D_h(a,b;q,t)=1+atD_{h-1}(1,b;q,qt)D_h(a,b;q,t)\;.$$
Iterating $D_h(a,b;q,t)=1/(1-atD_{h-1}(1,b;q,qt))$ gives (\ref{confrac}).
\end{proof}

It is clear that the generating function can also be written as a rational function, and
from Section 3 in \cite{flajolet1980} we obtain the following three-term recurrence.
\begin{prop} For $h\geq1$,
$$
D_h(a,b;q,t)= \frac{Q_h(0,b;q,t)}{Q_h(a,b;q,t)}\;,
$$
where
\begin{equation}
\label{recurrence}
Q_h(a,b;q,t)=\begin{cases}  1-abt\;, & h=1\;,\\
                             1-at-bqt\;, & h=2\;,\\
                             Q_{h-1}(a,1;q,t) - bq^{h-1}t Q_{h-2}(a,1;q,t)\; & h\geq 3\;.
\end{cases}
\end{equation}
\end{prop}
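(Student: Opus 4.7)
The plan is to prove Proposition 3 by induction on $h$, using as input the functional recurrence
\[
D_h(a,b;q,t) = \frac{1}{1 - at\, D_{h-1}(1,b;q,qt)}
\]
established in the proof of Proposition 2. The base cases $h=1$ and $h=2$ can be checked by direct computation from the definitions of $Q_1$ and $Q_2$: one has $Q_1(0,b;q,t)/Q_1(a,b;q,t) = 1/(1-abt) = D_1(a,b;q,t)$, and $Q_2(0,b;q,t)/Q_2(a,b;q,t) = (1-bqt)/(1-at-bqt)$ agrees with the evaluation of \eqref{confrac} at $h=2$.

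For the inductive step at $h\geq 3$, substituting the inductive hypothesis $D_{h-1}(1,b;q,qt) = Q_{h-1}(0,b;q,qt)/Q_{h-1}(1,b;q,qt)$ into the functional recurrence gives
\[
D_h(a,b;q,t) = \frac{Q_{h-1}(1,b;q,qt)}{Q_{h-1}(1,b;q,qt) - at\, Q_{h-1}(0,b;q,qt)}\;.
\]
It therefore suffices to establish the two auxiliary identities
\begin{align*}
\text{(I)}\quad & Q_h(0,b;q,t) = Q_{h-1}(1,b;q,qt)\;, \\
\text{(II)}\quad & Q_h(a,b;q,t) = Q_{h-1}(1,b;q,qt) - at\, Q_{h-1}(0,b;q,qt)\;.
\end{align*}

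Both identities I would prove by induction from \eqref{recurrence}. Observing that $Q_h(a,b;q,t)$ is affine in $a$ (the base cases are, and \eqref{recurrence} preserves this), identity (II) reduces to matching the constant-in-$a$ term, which is (I), and the coefficient of $a$; expanding $Q_{h-1}(a,1;q,t)$ and $Q_{h-2}(a,1;q,t)$ via the inductive form of (II), then re-applying \eqref{recurrence} at $a=0$, regroups the $a$-coefficient into precisely $-t\, Q_{h-1}(0,b;q,qt)$. Identity (I), obtained by setting $a=0$ in \eqref{recurrence} and comparing with \eqref{recurrence} applied to $Q_{h-1}(1,b;q,qt)$, reduces to the single-variable claim $Q_k(0,1;q,t) = Q_{k-1}(1,1;q,qt)$; this last claim follows by a short induction since both sides satisfy $p_k(t) = p_{k-1}(t) - q^{k-1}t\, p_{k-2}(t)$ and agree for $k=2,3$.

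The main obstacle will be the bookkeeping needed to reconcile two specialisations appearing simultaneously: \eqref{recurrence} always evaluates the right-hand side at $b=1$, while the functional recurrence from Proposition 2 shifts $t\to qt$. The single-variable identity $Q_k(0,1;q,t) = Q_{k-1}(1,1;q,qt)$ is what cleanly ties these together, and carefully unwinding it in the inductive steps is where most of the care is required.
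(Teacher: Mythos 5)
Your proposal is correct, but it follows a genuinely different route from the paper. The paper does not induct at all: it checks $h=1,2$ directly and then identifies \eqref{confrac} as the $h$-th convergent of a $J$-fraction, importing the standard linear recurrences for convergents from page 152 of Flajolet's paper after matching parameters ($t=z^2$, $a_0=a$, $a_k=1$ for $k\geq1$, $b_k=q^{k-1}$ for $k<h$, $b_h=bq^{h-1}$). Your argument is instead self-contained: it couples the top-down recursion $D_h(a,b;q,t)=1/(1-at\,D_{h-1}(1,b;q,qt))$ with the bottom-up recurrence \eqref{recurrence}, and the entire content is the pair of transfer identities (I) $Q_h(0,b;q,t)=Q_{h-1}(1,b;q,qt)$ and (II) $Q_h(a,b;q,t)=Q_{h-1}(1,b;q,qt)-at\,Q_{h-1}(0,b;q,qt)$. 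These identities are true and your inductive scheme does go through: expanding $Q_{h-1}(a,1;q,t)$ and $Q_{h-2}(a,1;q,t)$ by the hypothesis and regrouping via \eqref{recurrence} at argument $qt$ works precisely because $q^{h-2}\cdot qt=q^{h-1}t$, which collapses the coefficient of $a$ to $-t\,Q_{h-1}(0,b;q,qt)$, and (I) does reduce to $Q_k(0,1;q,t)=Q_{k-1}(1,1;q,qt)$, both sides obeying the same three-term recurrence and agreeing at $k=2,3$. The one piece of bookkeeping you should state explicitly is that, since \eqref{recurrence} reaches back two levels and $Q_0$ is undefined, identities (I) and (II) each need the two base cases $h=2$ and $h=3$ verified by hand (both are immediate). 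What your route buys is an elementary argument that independently confirms the specific, slightly unusual form of \eqref{recurrence} (the weight $b$ appearing only in the final term, with $b=1$ substituted on the right); what the paper's route buys is brevity, at the price of asking the reader to translate the statement into the notation of the cited general theory of continued-fraction convergents.
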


\begin{proof}
The initial conditions follow from $D_1(a,b;q,t)=1/(1-abt)$ and $D_2(a,b;q,t)=(1-bqt)/(1-at-bqt)$,
and the factor $bq^{h-1}t$ in the three-term recurrence is just the final term in the continued fraction
(\ref{confrac}). More precisely, comparing Eqn. (\ref{confrac}) with the $h$-th convergent of the $J$-fraction 
on page 152 of \cite{flajolet1980}, we have $t=z^2$, $a_0=a$, $a_k=1$ for $k\geq 1$, $b_k=q^{k-1}$ for $0\leq k<h$, $b_h=bq^{h-1}$,
and $c_k=0$ for $k\geq 0$. The linear recurrences given on page 152 of \cite{flajolet1980} then reduce to the recurrence in Eqn. (\ref{recurrence}).
\end{proof}

We proceed by considering the generating function of the denominators $Q_h(a,b;q,t)$,
$$
W(z;a,b;q,t) = \sum_{h=1}^{\infty} Q_h(a,b;q,t) z^h\;.
$$
The next proposition expresses $W(z;a,b;q,t)$ in terms of the basic hypergeometric series 
$\phi(z,q,t) ={}_1\phi_2(q;0,z;q,t)$ \cite{gasper2004}, {\em i.e.},
$$
\phi(z,q,t)=\sum_{n=0}^{\infty} \frac{q^{n(n-1)} t^n}{(z;q)_n}\;.
$$

\begin{prop}
\begin{multline}
\label{genfunct}
W(z;a,b;q,t)=
\frac{abt^2z^3-at^2z^3+abtz-abt-atz-btz-bz+b+z}{tz}\\
   +\frac{(bz-b-z)(1-at)}{zt}\phi(z,q,-tz^2)
   - (bz-b-z)\phi(z,q,-qtz^2)\;.
\end{multline}
\end{prop}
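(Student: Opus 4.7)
The plan is to reduce to the $b=1$ specialisation first, exploiting that the recurrence (\ref{recurrence}) at $h \geq 3$ has only $Q_{h-1}(a,1)$ and $Q_{h-2}(a,1)$ on the right-hand side. I would introduce
\[ V(z) := 1 + W(z;a,1;q,t) = \sum_{h \geq 0} Q_h(a,1;q,t)\, z^h \]
with the convention $Q_0 := 1$, under which (\ref{recurrence}) is consistent at $h=2$ as well. Multiplying by $z^h$ and summing converts the three-term recurrence into the $q$-difference equation
\[ (1-z)\,V(z) + qtz^2\,V(qz) = 1 - atz. \]

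Iterating this as $V(z) = (1-atz)/(1-z) - \bigl[qtz^2/(1-z)\bigr] V(qz)$ and telescoping the resulting products of $q$-Pochhammer factors produces
\[ V(z) = \sum_{n=0}^\infty \frac{(-t)^n q^{n^2} z^{2n}(1 - atq^n z)}{(z;q)_{n+1}}. \]
To match the $\phi$'s appearing in the target formula, I would split $1 - atq^n z = (1 - q^n z) + (1-at)\,q^n z$, decoupling the sum into two pieces: the first uses $(z;q)_{n+1} = (1-q^n z)(z;q)_n$ and the identity $q^{n^2} = q^n q^{n(n-1)}$ to collapse into $\phi(z, q, -qtz^2)$, while the second, after the index shift $n \mapsto m-1$, becomes $\tfrac{at-1}{tz}\bigl[\phi(z, q, -tz^2) - 1\bigr]$. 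This gives a closed form for $V(z)$ as a combination of the two required $\phi$'s.

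Finally, I would lift back to general $b$. Expanding
\[ W(z; a, b; q, t) = Q_1(a,b)\,z + Q_2(a,b)\,z^2 + \sum_{h \geq 3} Q_h(a,b)\,z^h \]
and using the recurrence to rewrite the last sum in terms of $V$ gives $W(z; a, b; q, t) = -abtz + z\,V(z) - bqtz^2\,V(qz)$. The key algebraic step is then to eliminate $V(qz)$ using the functional equation above, via $bqtz^2\,V(qz) = b(1-atz) - b(1-z)\,V(z)$, which collapses the expression to the compact identity
\[ W(z; a, b; q, t) = -b + (b + z - bz)\,V(z). \]
Substituting the $\phi$-form of $V(z)$ into this identity then produces (\ref{genfunct}). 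The main obstacle is the middle step—identifying the iterated summation with the correctly normalised basic hypergeometric series while keeping track of the $q^{n^2}$ versus $q^{n(n-1)}$ exponents and performing the index shift in the second piece. By contrast, the concluding collapse to $-b + (b+z-bz)\,V(z)$ is brief, but it is the conceptual pivot that produces a clean final formula.
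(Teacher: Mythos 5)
Your derivation follows essentially the same route as the paper: convert the three-term recurrence into a $q$-functional equation, solve the $b=1$ case by iteration, identify the iterated sum with the two series $\phi(z,q,-tz^2)$ and $\phi(z,q,-qtz^2)$, and then lift to general $b$. All of your intermediate steps check out: the equation $(1-z)V(z)+qtz^2V(qz)=1-atz$, the iterated solution $V(z)=\sum_{n\ge0}(-t)^nq^{n^2}z^{2n}(1-atq^nz)/(z;q)_{n+1}$, the split $1-atq^nz=(1-q^nz)+(1-at)q^nz$ with the index shift, and the collapse $W(z;a,b;q,t)=-b+(b+z-bz)V(z)$ (equivalently $Q_h(a,b)=bQ_h(a,1)+(1-b)Q_{h-1}(a,1)$ with $Q_0=1$) are all correct; that last identity is in fact a tidier way of doing the lift than the paper's direct substitution of the $b=1$ solution into its general-$b$ functional equation.

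Be aware, though, that what you prove is not literally the printed formula: substituting $V$ into $-b+(b+z-bz)V(z)$ yields the rational part $(abtz-abt-atz-btz-bz+b+z)/(tz)$, i.e.\ without the terms $abt^2z^3-at^2z^3$ that appear in (\ref{genfunct}). The two expressions differ by the single term $at(b-1)z^2$, and your version appears to be the correct one: the coefficient of $z^2$ in $W$ must be $Q_2(a,b)=1-at-bqt$, which your formula reproduces, whereas the printed right-hand side gives $1+abt-2at-bqt$; your version is also the one consistent with Proposition 6 at $h=2$. The slip can be traced to the functional equation displayed in the paper's proof, whose inhomogeneous part should be $z(1-abt)-bqtz^2$ rather than $z(1-z)(1-abt)+z^2(1-at-bqt)$ (when peeling the first term off $zW(z;a,1;q,t)$ one must subtract $(1-at)z^2$, not $(1-abt)z^2$), and this propagates verbatim into the stated (\ref{genfunct}). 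So your argument is sound and essentially the paper's, but you should state explicitly that it establishes the corrected form of the proposition, flagging the spurious numerator terms in the printed statement; note that at $b=1$ the two versions coincide, so the $b=1$ intermediate formula and everything downstream of Proposition 6 are unaffected.
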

\begin{proof}
The recurrence (\ref{recurrence}) implies that $W(z;a,b;q,t)$ satisfies a 
functional equation,
$$
W(z;a,b;q,t) = z(1-z)(1-a b t)+z^2(1- a t - b q t) + z W(z;a,1;q,t) - z^2 b q t W(qz;a,1;q,t)\;.
$$
Letting $b=1$ and isolating $W(z;a,1,q,t)$ gives
$$
W(z;a,1;q,t)= \frac{z}{1-z}(1-at-zqt)-\frac{z^2}{1-z}qtW(qz;a,1;q,t)\;.
$$
This functional equation has the structure $W(z)=A(z)+B(z)W(qz)$ which can readily be solved by iteration to give
$W(z)=\sum_{n=0}^\infty A(q^nz)\prod_{k=0}^{n-1}B(q^kz)$. In this way we find
\begin{align*}
W(z;a,1;q,t)=&\sum_{n=0}^{\infty} \frac{(-1)^n z^{2n+1}q^{n^2+n}t^n
\left(1-at-zq^{n+1}t\right)}{(z;q)_{n+1}}\\
=&\frac{1-at}{t z} - 1 
 - \frac{1-at}{t z}\phi(z,q,-t z^2)
+  \phi(z,q,-qt z^2)\;.
\end{align*}
Inserting this expression into the functional equation gives (\ref{genfunct}).
\end{proof}

\begin{prop}
For $h\geq1$,
\begin{multline}
\label{orthopol}
Q_h(a,b;q,t)=
\sum\limits_{m=0}^\infty
(-t)^m q^{m(m-1)}\times\\
\left( (1-b) \qbinom{h-m}{m}{q} 
    +b \qbinom{h+1-m}{m}{q} 
     - (1-a)(1-b) \qbinom{h-1-m}{m-1}{q}
    - (1-a) b \qbinom{h-m}{m-1}{q} \right)\;.
\end{multline}
\end{prop}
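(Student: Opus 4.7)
I will verify~(\ref{orthopol}) by induction on $h$, using the three-term recurrence of Proposition 2.

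For the base cases $h=1,2$, only the $m=0$ and $m=1$ summands on the right-hand side of~(\ref{orthopol}) are nonzero, and direct evaluation yields $1 - abt$ and $1 - at - bqt$, matching the initial conditions of the recurrence.

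For the inductive step, denote the right-hand side of~(\ref{orthopol}) by $\widetilde{Q}_h(a,b;q,t)$. The task is to verify
$$\widetilde{Q}_h(a,b;q,t) = \widetilde{Q}_{h-1}(a,1;q,t) - b q^{h-1} t\, \widetilde{Q}_{h-2}(a,1;q,t), \qquad h \geq 3.$$
Setting $b = 1$ collapses the four $q$-binomials per summand to two. Shifting the summation index $m \mapsto m-1$ in the $\widetilde{Q}_{h-2}$ term then absorbs the prefactor $-bq^{h-1}t$ into an effective weight $b q^{h-2m+1}$, bringing both sides into the common basis $(-t)^m q^{m(m-1)}$. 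The recurrence thereby reduces to a pointwise identity in $m$, and its $b$-independent part matches trivially.

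The remaining coefficient of $b$ takes the form
\begin{multline*}
\Bigl(\qbinom{h+1-m}{m}{q} - \qbinom{h-m}{m}{q}\Bigr) - (1-a)\Bigl(\qbinom{h-m}{m-1}{q} - \qbinom{h-m-1}{m-1}{q}\Bigr) \\
= q^{h-2m+1}\Bigl(\qbinom{h-m}{m-1}{q} - (1-a)\qbinom{h-m-1}{m-2}{q}\Bigr),
\end{multline*}
which follows from two applications of the $q$-Pascal rule $\qbinom{n}{k}{q} - \qbinom{n-1}{k}{q} = q^{n-k}\qbinom{n-1}{k-1}{q}$ with $(n,k) = (h+1-m, m)$ and $(n,k) = (h-m, m-1)$.

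The main obstacle is purely organizational: with four $q$-binomials in the definition of the summand, one must carefully track how the $(1-b)$ and $b$ pieces of~(\ref{orthopol}) interact under the $b = 1$ specialization and the index shift. Once this bookkeeping is done, the required identity collapses to the standard $q$-Pascal rule applied twice.
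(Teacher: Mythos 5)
Your proof is correct, and it takes a genuinely different route from the paper's. The paper obtains (\ref{orthopol}) by summing the polynomials into a generating function $W(z;a,b;q,t)=\sum_{h\geq1}Q_h(a,b;q,t)z^h$, solving the resulting $q$-functional equation by iteration in terms of the series $\phi(z,q,t)$, and then extracting the coefficient of $z^h$ via the $q$-binomial theorem; you instead verify the closed form directly against the three-term recurrence (\ref{recurrence}) by induction on $h$. Your computation checks out: the base cases $h=1,2$ evaluate to $1-abt$ and $1-at-bqt$, the $b=1$ specialization kills the $(1-b)$ terms, the shift $m\mapsto m-1$ in the $Q_{h-2}$ contribution correctly produces the weight $bq^{h-2m+1}$ over the common factor $(-t)^mq^{m(m-1)}$, and the remaining identity in the coefficient of $b$ is exactly two instances of $\qbinom{n}{k}{q}-\qbinom{n-1}{k}{q}=q^{n-k}\qbinom{n-1}{k-1}{q}$ with $(n,k)=(h+1-m,m)$ and $(h-m,m-1)$, with the exponent $q^{h-2m+1}$ right in both places. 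One point worth making explicit in a final write-up: under the paper's convention that $\qbinom{n}{k}{q}=0$ for $k<0$ or $k>n$, this $q$-Pascal rule holds for all integers $n,k$ except $(n,k)=(0,0)$; your two applications avoid that exceptional case for every $m\geq0$ once $h\geq3$, so the identity really does hold term by term, including the boundary terms $m=0,1$ where several binomials vanish. As for the comparison: your induction is shorter and entirely elementary, but it is a verification that presupposes the answer; the paper's generating-function route is longer but derivational (it explains where the formula comes from) and yields $W$ in terms of ${}_1\phi_2$ series as a by-product, which in turn makes the $h\to\infty$ limit and the connection to the unrestricted generating function transparent.
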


\begin{proof}
We obtain $Q_h(a,b;q,t)$ by extracting the coefficient of $z^h$ in $W(z;a,b;q,t)$. 
We expand the $q$-product in the function 
$\phi$ with the help of the $q$-binomial theorem (see page 490 of \cite{andrews1999a-a}) to obtain
$$
\phi(z,q,tz^2) = 1 + \sum_{m=0}^{\infty} z^m \sum_{n=1}^\infty
q^{n(n-1)}
\qbinom{m-n-1}{n-1}{q} t^n\;.
$$
Inserting this expansion into (\ref{genfunct}) and collecting terms with equal powers
in $z$ gives (\ref{orthopol}).  
\end{proof}

Theorem 1 now follows from Propositions 4 and 6 and by checking that the expression gives the correct result also for $h=0$.

\section*{Acknowledgements} 
Financial support from the Australian Research Council via its support
for the Centre of Excellence for Mathematics and Statistics of Complex
Systems is gratefully acknowledged by the authors. A L Owczarek thanks
the School of Mathematical Sciences, Queen Mary, University of London
for hospitality.

 \end{document}